\numberwithin{equation}{section}
\newtheorem{theorem}{Theorem}[section]
\newtheorem{corollary}[theorem]{Corollary}
\newtheorem{lemma}[theorem]{Lemma}
\theoremstyle{remark}
\DeclareMathOperator{\dist}{dist}
\newcommand{\N}{\mathbb{N}}
\def\dist{{\mathop\mathrm{\,dist\,}}}
\def\bint{{\ifinner\rlap{\bf\kern.35em--}
\int\else\rlap{\bf\kern.45em--}\int\fi}\ignorespaces}
\def\bbint{{\ifinner\rlap{\bf\kern.35em--}
\hspace{0.078cm}\int\else\rlap{\bf\kern.45em--}\int\fi}\ignorespaces}
\def\bint{{\ifinner\rlap{\bf\kern.35em--}
\int\else\rlap{\bf\kern.45em--}\int\fi}\ignorespaces}
\begin{document}

\title[Approximation by uniform domains in doubling quasiconvex metric spaces]
{Approximation by uniform domains in\\ doubling quasiconvex metric spaces}

\author{Tapio Rajala}

\address{University of Jyvaskyla \\
         Department of Mathematics and Statistics \\
         P.O. Box 35 (MaD) \\
         FI-40014 University of Jyvaskyla \\
         Finland}
\email{tapio.m.rajala@jyu.fi}

\thanks{The author acknowledges the support from the Academy of Finland, grant no. 314789. 
}
\subjclass[2000]{Primary 30L99. Secondary 46E35, 26B30.}
\keywords{Sobolev extension, uniform domain, quasiconvexity}
\date{\today}



\maketitle

\begin{center}
\emph{Dedicated to Professor Pekka Koskela on the occasion of his 60th birthday.}
\end{center}

\begin{abstract}
We show that any bounded domain in a doubling quasiconvex metric space can be approximated from inside and outside by uniform domains.
\end{abstract}

\section{Introduction}

We provide an approximation of bounded domains from inside and from outside by uniform domains in doubling quasiconvex metric spaces. A metric space $(X,d)$ is called \emph{(metrically) doubling}, if there exists a constant $C_d$ so that for all $r>0$, any ball of radius $r$ can be covered by $C_d$ balls of radius $r/2$. A metric space is called \emph{quasiconvex}, if there exists a constant $C_q < \infty$ such that any $x,y \in X$ can be connected by a curve $\gamma$ in $X$ with the length bound
\[
 \ell(\gamma) \le C_q d(x,y).
\]
A domain $\Omega \subset X$ is called \emph{uniform}, if there exists a constant $C_u < \infty$ such that for every $x,y \in \Omega$ there exists a curve $\gamma \subset \Omega$ such that
\[
 \ell(\gamma) \le C_u d(x,y)
\]
and for all $z \in \gamma$ it holds
\[
 \min\left\{\ell(\gamma_{x,z}),\ell(\gamma_{z,y})\right\} \le C_u\dist(z,X\setminus \Omega),
\]
where $\gamma_{x,z}$ and $\gamma_{z,y}$ denote the shortest subcurves of $\gamma$ joining $z$ to $x$ and $y$, respectively. 

With the definitions now recalled we can state the result of this paper.

\begin{theorem}\label{thm:main}
 Let $(X,d)$ be a doubling quasiconvex metric space and $\Omega \subset X$ a bounded domain. Then for every $\varepsilon > 0$ there exist uniform domains
 $\Omega_I$ and $\Omega_O$ such that
 \[
  \Omega_I \subset \Omega \subset \Omega_O,
 \]
 $\Omega_O \subset B(\Omega,\varepsilon)$,
 and $X \setminus \Omega_I \subset B(X \setminus \Omega,\varepsilon)$.
\end{theorem}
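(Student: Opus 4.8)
Let me think about how to construct uniform domains approximating a bounded domain in a doubling quasiconvex metric space.

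First, the quasiconvexity: we can replace $d$ by the length metric $d_\ell$, which is biLipschitz equivalent to $d$ with constant $C_q$. Under $d_\ell$, the space becomes geodesic (or at least length), and doubling is preserved up to constants. Uniformity is a biLipschitz-invariant notion up to constants, and the conditions $\Omega_O \subset B(\Omega,\varepsilon)$, $X\setminus\Omega_I \subset B(X\setminus\Omega,\varepsilon)$ are robust under shrinking $\varepsilon$. So WLOG assume $X$ is a geodesic doubling space.

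The core idea is a Whitney-type / dyadic construction. I would use a Whitney decomposition of $\Omega$ into "cubes" (or balls) $\{Q_i\}$ with $\diam Q_i \approx \dist(Q_i, X\setminus\Omega)$, available since $X$ is doubling (Christ-David cubes or Whitney balls). The plan is the following.

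\begin{enumerate}
\item[(1)] \textbf{Inner domain.} Fix a basepoint $x_0 \in \Omega$. Take all Whitney cubes $Q_i$ with $\diam Q_i \ge \delta$ for a small $\delta = \delta(\varepsilon)$ — equivalently, the "thick part" $\Omega^\delta := \{x \in \Omega : \dist(x,X\setminus\Omega) > c\delta\}$. This set need not be connected; let $\Omega_I$ be the connected component containing $x_0$ (choosing $\delta$ small enough that $x_0$ is deep inside). Since $\Omega$ is open and connected (a domain), for $\delta$ small $\Omega_I$ exhausts $\Omega$ up to an $\varepsilon$-neighborhood of the boundary, giving $X\setminus\Omega_I \subset B(X\setminus\Omega,\varepsilon)$. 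The issue is that $\Omega^\delta$ or its component is not automatically uniform; its boundary could still be wild at scales $\gtrsim\delta$.

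\item[(2)] \textbf{Correcting to a uniform domain.} To fix the geometry, instead of taking a sublevel set of $\dist(\cdot, X\setminus\Omega)$ directly, I would build $\Omega_I$ as a union of a carefully chosen sub-collection of Whitney cubes together with "corridors" (chains of geodesics thickened appropriately) connecting each chosen cube back to the central cube containing $x_0$. Concretely: enumerate the Whitney cubes of $\Omega$ of diameter $\ge \delta$; for each such cube $Q$, pick a Whitney chain $Q = Q^0, Q^1, \dots, Q^k = Q_{x_0}$ of adjacent Whitney cubes (existence by connectedness of $\Omega$), and let the corridor be the union of thickened geodesic segments between centers $\frac12$-inside each $Q^j$. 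Set $\Omega_I$ to be the union of all chosen cubes (shrunk by a fixed factor) and all corridors. The verification of the uniform condition then reduces to: (a) any two points in $\Omega_I$ lie in (the shrunk version of) some Whitney cube or corridor, and the concatenation of the two corridors to $x_0$ gives a curve of controlled length — here one uses the standard fact that in a doubling space the Whitney-chain length between $Q$ and $Q'$ is $\lesssim \log(\cdots)$ but more importantly that one can shortcut; and (b) the cigar condition, which holds because along a corridor the distance to the complement of $\Omega_I$ is comparable to the diameter of the local Whitney cube, by construction of the thickening.

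\item[(3)] \textbf{Outer domain.} For $\Omega_O$, apply the analogous construction to the "outside": work with $U := B(\Omega,\varepsilon_0)$, take a Whitney decomposition of $U \setminus \Omega$... — more cleanly, let $\Omega_O := \Omega \cup \bigcup \{$thin collar cubes$\}$, i.e. add to $\Omega$ all Whitney cubes of $X\setminus\overline\Omega$ (and of the "boundary layer") of diameter $\le \delta'$, plus corridors as above to make the result connected and uniform. Since only small cubes are added, $\Omega_O \subset B(\Omega,\varepsilon)$. Alternatively, and perhaps more symmetrically, obtain $\Omega_O$ by applying the inner construction to a slightly enlarged domain and noting the duality; but the direct collar construction is cleaner.
\end{enumerate}

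\textbf{Main obstacle.} The genuinely hard part is step (2): ensuring the cigar/John condition holds \emph{at all scales simultaneously}, not just at the Whitney scale. Near a chosen cube $Q$ of diameter $\sim\delta$, the complement of $\Omega_I$ may come much closer to $Q$ than $\dist(Q,X\setminus\Omega)$ would suggest, because we discarded neighboring small cubes. One must design the corridor system so that the "thick" region $\Omega_I$ stays a definite distance — comparable to the local scale — from the discarded region, which forces the thickening radii in the corridors to be chosen as a slowly-varying (geometrically graded) function of the Whitney scale, and forces the selection of \emph{which} cubes to keep to be "downward closed" in an appropriate sense (if you keep $Q$ you must keep a buffer of neighbors). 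Getting these quantitative choices mutually consistent — keep enough cubes that $\Omega_I$ is $\varepsilon$-close to $\Omega$, keep few enough / buffer enough that uniformity holds with constants independent of $\delta$ — is the technical heart. The doubling property is what makes the Whitney combinatorics (bounded overlap, bounded number of neighbors, controlled chain geometry) tractable, and quasiconvexity is what lets the geodesic corridors have length comparable to distance.
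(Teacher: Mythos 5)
The core difficulty you flag in your ``Main obstacle'' paragraph---getting the cigar condition at all scales while discarding small Whitney cubes, and being able to ``shortcut'' rather than routing every curve through $x_0$---is exactly the crux, but your proposal does not resolve it; it describes what would need to be true (``graded thickening,'' ``downward-closed selection,'' a shortcut) without supplying a mechanism. The paper's solution hinges on a combinatorial lemma that has no analogue in your plan: for every scale $r$ there is a covering of $X$ by balls $B(x_i,r_i)$ with $r$-separated centers and radii $r_i\in[r,2r]$ chosen so that $d(x_i,x_j)-r_i-r_j\notin(0,cr)$ for all $i,j$; in other words, any two covering balls of that generation either overlap or are at distance $\gtrsim r$. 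This separation property is the metric substitute for the Euclidean fact that two non-adjacent dyadic cubes of side $\ell$ are at distance $\ge\ell$, and it is precisely what Whitney/Christ-type decompositions do \emph{not} provide (the paper notes this explicitly). The construction is then layered, not corridor-based: $E_1$ is the thick component (or all of $\Omega$ for $\Omega_O$), and $E_{k+1}$ is the union of all scale-$\delta^k$ covering balls meeting $B(E_k,\delta^k)$, so that $\dist(E_k,\partial\Omega_I)\ge\delta^k$ holds automatically.

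With that in place, uniformity is verified for nearby points $x,y$ with $d(x,y)\sim\delta^{n}$ by climbing each of $x,y$ up through the layers $E_{k_x},E_{k_x-1},\dots$ to points $x',y'\in E_n$ (each step has length $\lesssim\delta^k$ and stays $\ge\delta^k$ from the boundary), and then invoking the separation property at scale $\delta^{n-1}$: since $d(x',y')<c\delta^{n-1}$, the two covering balls containing $x'$ and $y'$ must in fact overlap, so $x'$ and $y'$ can be joined through the two centers by a path of length $\lesssim\delta^{n-1}\lesssim d(x,y)$. This is the quantitative shortcut your sketch needs but does not build; routing through $x_0$ as in your corridor scheme gives $\ell(\gamma)$ of order $\diam\Omega$ regardless of $d(x,y)$ and so cannot satisfy $\ell(\gamma)\le C_u\,d(x,y)$. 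Without the separation lemma (or an equivalent device) you also have no lower bound on how close a discarded Whitney cube can sit to a kept one, which is exactly the failure mode you yourself identified. Finally, note that the paper obtains $\Omega_O$ by running the \emph{same} layered construction starting from $E_1=\Omega$, rather than by your separate ``collar cubes'' scheme; this symmetry is a side benefit of working with the ball covering rather than a decomposition tied to $\Omega$.
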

In the above theorem we have used the notation
\[
B(A,r) = \bigcup_{x \in A} B(x,r)
\]
for the open $r$-neighbourhood of a set $A \subset X$, with $r > 0$, and $B(x,r)$ denoting the open ball of radius $r$ centred at a point $x \in X$.

Although there are characterizations of uniform domains in metric spaces, for instance via tangents \cite{H2011}, we are not aware of previous general existence results such as Theorem \ref{thm:main}.
\medskip

The setting of Theorem \ref{thm:main} is motivated by Sobolev- and BV-extension domains in complete metric measure spaces with a doubling measure and supporting a $(1,p)$-Poincar\'e inequality ($p$-PI-spaces for short). A measure $\mu$ on $(X,d)$ is doubling, if there exists a constant $C>0$ such that $\mu(B(x,2r)) \le C\mu(B(x,r))$ for every $x \in X$ and $r>0$. Recall that a metric space supporting a positive and locally finite doubling measure $\mu$ is doubling in the metric sense. The metric measure space $(X,d,\mu)$ supports a $(1,p)$-Poincar\'e inequality if there exist constants $C,\lambda\ge 1$ so that the following holds: for any $x \in X$ and $r>0$ the ball $B(x,r) \subset X$ has positive and finite $\mu$-measure and 
\[
 \frac{1}{\mu(B(x,r))}\int_{B(x,r)}|u-u_{B(x,r)}|\,d\mu \le C r \left(\frac{1}{\mu(B(x,\lambda r))}\int_{B(x,\lambda r)} \rho^p \right)^\frac{1}{p}
\]
holds for any measurable function $u$ and its upper gradient $\rho$, with $u_{B(x,r)}$ being the average of $u$ in $B(x,r)$.

On one hand, $p$-PI spaces \cite{HK1998} are known to be quasiconvex \cite{C1999,K2003}. On the other hand, in \cite{BS2007} it was shown that uniform domains in $p$-PI-spaces are $N^{1,p}$-extension domains, for $1 \le p < \infty$, for the Newtonian Sobolev spaces, and in \cite{L2015} it was shown that bounded uniform domains in 1-PI-spaces are BV-extension domains. See \cite{S2000} for the definitions of upper gradients and Newtonian Sobolev spaces and \cite{A2002,M2003} for the BV space.
The main purpose of this paper is to increase the applicability of the results in \cite{BS2007,L2015} by providing a large collection of uniform domains.  As a straightforward corollary we have the following approximation result by extension domains.

\begin{corollary}\label{cor:extension}
 Let  $1 \le p < \infty$, let $(X,d,\mu)$ be a complete metric measure space, with $\mu$ doubling, supporting a (1,p)-Poincar\'e inequality, and let $\Omega \subset X$ be a bounded domain. Then $\Omega$ can be approximated (as in Theorem \ref{thm:main}) by $N^{1,p}$-extension and, in the case $p=1$, also by $BV$-extension domains.
\end{corollary}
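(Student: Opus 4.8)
The plan is to obtain Corollary \ref{cor:extension} as an immediate consequence of Theorem \ref{thm:main} together with the two cited extension theorems for uniform domains in PI-spaces. First I would check that the ambient space meets the hypotheses of Theorem \ref{thm:main}: the doubling property of the measure $\mu$ implies, by a routine ball-counting argument, that $(X,d)$ is metrically doubling, and the local Poincar\'e inequality implies, by \cite{C1999,K2003}, that $X$ is quasiconvex. Thus $(X,d)$ is a doubling quasiconvex metric space, so for every $\varepsilon>0$ Theorem \ref{thm:main} produces uniform domains $\Omega_I \subset \Omega \subset \Omega_O$ with $\Omega_O \subset B(\Omega,\varepsilon)$ and $X \setminus \Omega_I \subset B(X \setminus \Omega,\varepsilon)$.

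Next I would observe that both approximating domains are bounded: $\Omega_I \subset \Omega$ is bounded by assumption, and $\Omega_O \subset B(\Omega,\varepsilon)$ is bounded because $\Omega$ is; boundedness is needed only for the $BV$ statement. With this in hand the conclusion follows by quoting the literature: by \cite{BS2007}, uniform domains in PI-spaces are $N^{1,p}$-extension domains for every $1 < p < \infty$, so $\Omega_I$ and $\Omega_O$ are $N^{1,p}$-extension domains; by \cite{L2015}, bounded uniform domains in PI-spaces are $BV$-extension domains, so $\Omega_I$ and $\Omega_O$ are $BV$-extension domains as well. This is precisely the approximation asserted in the statement.

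The only point that needs any care --- and the step I would treat as the main, though minor, obstacle --- is verifying that the standing assumptions of \cite{C1999,K2003} and of \cite{BS2007,L2015} are matched exactly, in particular that the hypothesis ``complete, doubling, supporting a local Poincar\'e inequality'' supplies the \emph{global} quasiconvexity of $X$ demanded by Theorem \ref{thm:main} rather than only a local version, and that the ambient PI-structure (and not some extra property of the domains) is all that \cite{BS2007,L2015} require. Once the hypotheses are aligned, no further argument is needed.
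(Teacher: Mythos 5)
Your proposal is correct and reproduces exactly the chain of reasoning the paper leaves implicit in the paragraph preceding the corollary: measure doubling gives metric doubling, the local Poincar\'e inequality gives quasiconvexity via \cite{C1999,K2003}, Theorem \ref{thm:main} then supplies uniform approximants, and \cite{BS2007} and \cite{L2015} upgrade them to $N^{1,p}$- and (for bounded domains) $BV$-extension domains. The only addition you make beyond the paper is the explicit observation that $\Omega_I$ and $\Omega_O$ are bounded so that \cite{L2015} applies, which is a worthwhile sanity check but not a different argument.
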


Notice also that in the case when $\Omega$ is unbounded, we can for example fix a point $x_0 \in \Omega$ and for each $i \in \N$ approximate the connected component of $B(x_0,i) \cap \Omega$ containing $x_0$ from inside by $\Omega_i$ using Theorem \ref{thm:main} with the choice $\varepsilon = 1/i$, and thus obtain
\[
 \Omega = \bigcup_{i=1}^\infty \Omega_i,
\]
with $\Omega_i$ uniform for all $i \in \N$.

\section{Construction of the uniform domains}

In the Euclidean setting we could use closed dyadic cubes to construct the uniform domains. Using just the fact that a Euclidean cube is John (and not that it is in fact uniform), we could start with a finite union of cubes of some fixed side-length, then take all the neighbouring cubes with a constant $c \in (0,1)$ times smaller side-length than the original ones and continue taking smaller and smaller cubes.
The main thing one has to take care about is that two points near the boundary that are some small distance $r$ from each other can be connected by going via cubes not much larger than $r$ in side-length. This is handled by taking the constant $c$ small enough because of the nice property of closed Euclidean dyadic cubes: if two cubes of side-length $l$ do not intersect, then their distance is at least $l$.

We will use the above idea in the metric setting. However, none of the dyadic cube constructions that we have seen (for instance \cite{C1990,HK2012,HM2012,KRS2012}) take care about the separation of non-intersecting cubes, but only about other properties such as nestedness and size. Luckily, we do not need a nested structure, nor a decomposition, so we will work with coverings by balls having the needed separation property. The existence of such coverings is provided by the next lemma.

\begin{lemma}\label{lma:separatedballs}
 Let $(X,d)$ be a doubling metric space. Then there exists a constant $c \in (0,1)$ depending only on the doubling constant so that for every $r>0$ there exist $r$-separated points $\{x_i\} \subset X$ and radii $r_i \in [r,2r]$ such that
 \[
  X \subset \bigcup_{i}B(x_i,r_i)
 \] 
 and
 \[
  d(x_i,x_j) -r_i -r_j \notin (0,cr) \qquad \text{for all }i,j.
 \]
\end{lemma}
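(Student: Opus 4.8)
The plan is to start from a maximal $r$-separated set $\{x_i\}_{i \in I} \subset X$, which exists by Zorn's lemma. By maximality every point of $X$ lies within distance $r$ of some $x_i$, so $X \subset \bigcup_i B(x_i,r)$. Since we will only ever enlarge these radii, always taking $r_i \ge r$, the covering requirement $X \subset \bigcup_i B(x_i,r_i)$ holds automatically no matter which values $r_i \in [r,2r]$ we end up choosing. It thus remains only to select the radii so as to secure the separation property $d(x_i,x_j) - r_i - r_j \notin (0,cr)$.

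The key observation is that the doubling condition bounds the number of indices that can ever interact. Iterating the doubling property four times, any ball of radius $5r$ is covered by $N := C_d^4$ balls of radius $5r/16 < r/2$, and each such small ball contains at most one point of the $r$-separated set; hence $\#\{j : d(x_i,x_j) < 5r\} \le N$ for every $i$. Moreover, if $r_i,r_j \in [r,2r]$ and $c \le 1$, then a violation $0 < d(x_i,x_j) - r_i - r_j < cr$ forces $d(x_i,x_j) < r_i + r_j + cr \le 5r$, so only these boundedly many ``nearby'' pairs ever impose a constraint.

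Now set $c := \tfrac{1}{2N}$ and, after well-ordering $I$, choose the radii recursively: suppose $r_j$ has been chosen for all $j < i$. For a fixed such $j$, the set of values $t \in [r,2r]$ with $0 < d(x_i,x_j) - t - r_j < cr$ is the open interval $\bigl(d(x_i,x_j) - r_j - cr,\ d(x_i,x_j) - r_j\bigr)$ of length $cr$, and by the previous paragraph this interval is nonempty only for the at most $N$ indices $j < i$ with $d(x_i,x_j) < 5r$. The union of these forbidden intervals has total length at most $Ncr = r/2 < r = |[r,2r]|$, so we may pick $r_i \in [r,2r]$ outside all of them. This choice guarantees $d(x_i,x_j) - r_i - r_j \notin (0,cr)$ for all $j < i$; since the relation is symmetric in $i$ and $j$ and the case $j=i$ gives the negative value $-2r_i$, the condition holds for all pairs once every $r_i$ is chosen. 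Finally $N = C_d^4$ depends only on the doubling constant, hence so does $c$.

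The argument is essentially bookkeeping once this mechanism is identified; the only points requiring care are the threshold estimate $d(x_i,x_j) < 5r$ that isolates which pairs constrain the choice of $r_i$, the elementary doubling iteration giving the count $N$, and a clean handling of the (possibly uncountable, though in fact countable by separability of doubling spaces) index set via transfinite recursion. I do not expect any substantial obstacle beyond verifying these thresholds.
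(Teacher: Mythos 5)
Your proof is correct and takes essentially the same approach as the paper: start from a maximal $r$-separated net (so the covering is automatic once every $r_i \ge r$), then select each radius in $[r,2r]$ recursively so as to avoid the boundedly many forbidden open intervals of length $cr$, the bound on their number coming from the doubling property. Apart from cosmetic differences (you make the count $N = C_d^4$ explicit, use the slightly more careful threshold $5r$ where the paper writes $4r$, and invoke transfinite recursion where the paper simply indexes by $\N$ -- justified since a doubling space is separable, so the net is countable), the two arguments coincide.
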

\begin{proof}
 Let $\{x_i\}$ be a maximal $r$-separated net of points in $X$.
 Because of the maximality of the net, the balls $B(x_i,r_i)$ will cover $X$. 
 We select the suitable radii by induction. Let $r_1 = r$. Suppose that $r_1, \dots, r_k$ have been selected. Since $x_i$ are $r$-separated, by the metric doubling property of $(X,d)$, there exists an integer $N > 1$ depending only on the doubling constant $C_d$ so that there exist at most $N-1$ points 
 $x_i \in \{x_1, \dots, x_k\}$ with $d(x_{k+1},x_i) \le 4r$.
 Write 
 \[
  I_k = \left\{i \,:\, d(x_i,x_{k+1}) - r_i \in [r,2r], i \le k\right\}.
 \]
 Then $I_k$ contains at most $N-1$ points. Let $\lambda_1 < \lambda_2 < \cdots <\lambda_M$, with $M < N$, be so that 
 \[
 \{\lambda_j\}_{j=1}^M = \left\{d(x_i,x_{k+1}) - r_i \,:\, i \in I_k\right\}.
 \]
 Denote $\lambda_0 = r$ and $\lambda_{M+1} = 2r$. Let $m \in \{0, \dots, M\}$ be the smallest integer for which
 $\lambda_{m+1} - \lambda_{m}\ge r/N$. (If such $m$ did not exist we would have
\[
 r = \lambda_{M+1} - \lambda_0 = \sum_{j=0}^M \lambda_{j+1}-\lambda_j < (M+1)\frac{r}{N} \le r,
\]
which is a contradiction.) We now define $r_{k+1} =  \lambda_m$. In particular, we then have 
\begin{equation}\label{eq:boundfrom2r}
 r_{k+1} \in [r,2r-r/N].
\end{equation}
By the definition of $m$, we have
 \[
  d(x_i,x_{k+1}) - r_i - r_{k+1} \notin (0,r/N)
 \]
 for all $i \in I_k$. 
 
 Now, if $i \le k$ with $i \notin I_k$, either $d(x_i,x_{k+1}) - r_i < r$, in which case $  d(x_i,x_{k+1}) - r_i - r_{k+1} < 0$, or
 $d(x_i,x_{k+1}) - r_i > 2r$, in which case by \eqref{eq:boundfrom2r} we have $d(x_i,x_{k+1}) - r_i - r_{k+1} > r/N$.
  Thus, 
 \[
  d(x_i,x_{k+1}) - r_i - r_{k+1} \notin (0,r/N)
 \]
 for all $i \in\{1,\dots,k\}$. This shows that the claim holds with the constant $c = 1/N$.
\end{proof}

With the replacement of the Euclidean dyadic cubes by balls given in Lemma \ref{lma:separatedballs} we can now follow the idea presented for the Euclidean case to prove the metric version.

\begin{proof}[Proof of Theorem \ref{thm:main}]
 We start by noting that since our space $(X,d)$ is quasiconvex,
 the induced length distance
 \[
  d_l(x,y) = \inf\left\{\ell(\gamma)\,:\,\text{the curve } \gamma \text{ joins }x \text{ to }y\right\}
 \]
  satisfies $d \le d_l \le C_qd$ with the quasiconvexity constant $C_q$.
  If we would assume the space $(X,d)$ to be complete, by the generalized Hopf-Rinow Theorem we would know that $d_l$ is in fact a geodesic distance. However, we want to avoid making the extra assumption on completeness.
  In any case, because the property of being a uniform domain is invariant under a biLipschitz change of the distance, we may then assume that $(X,d)$ is a length space.
  \medskip
  
  {\color{blue}\textsc{Construction:}}
  The constructions of $\Omega_I$ and $\Omega_O$ are similar. The only difference is the starting point of the construction.
  Fix a point $x_0 \in \Omega$ and let $\tau \in (0,\min\{\dist(x_0,\partial\Omega),1\})$. The choice of $\tau$ will depend on $\varepsilon$, and the estimate on how small $\tau$ we need to select is postponed to the end of the proof. For constructing $\Omega_O$ we simply start with the set
  \[
   E_1 = \Omega,
  \]
  and for $\Omega_I$ we take $E_1$ to be the connected component of 
  \begin{equation}\label{eq:E_1}
    \left\{x \in X\,:\,\dist(x,X\setminus \Omega) > \tau\right\}
  \end{equation}
  containing the fixed point $x_0$.
  Let us consider the case $\Omega_I$. Thus $E_1$ is defined via \eqref{eq:E_1}.
  
  Let $c \in (0,1)$ be the constant from Lemma \ref{lma:separatedballs}. Define
  \[
   \delta=\min\left\{\frac{c}{20+c},\frac{\tau}{5+\tau}\right\}.
  \]
  We construct $\Omega_I$ using induction as follows. Suppose $E_k$ has been defined for a $k \in \N$. Let $\{x_i\}$ and $\{r_i\}$ be the points and radii given by Lemma \ref{lma:separatedballs} for the choice $r = \delta^k$, and define
  \[
   \mathcal B_k = \left\{B(x_i,r_i) \,:\, B(x_i,r_i)\cap B(E_k,\delta^k) \ne \emptyset \right\}.
  \]
  We then set 
  \[
   E_{k+1} = \bigcup_{B(x,r) \in \mathcal B_k} B(x,r).
  \]
  Finally, we define
  \[
   \Omega_I = \bigcup_{k=1}^\infty E_k.
  \]
  \medskip

  {\color{blue}\textsc{Uniformity:}}
  Let us next show that $\Omega_I$ is uniform. Take $x,y \in \Omega_I$ with $x \ne y$.
  Let $k_x$ and $k_y$ be the smallest integers such that $x \in E_{k_x}$ and $y \in E_{k_y}$. Without loss of generality we may assume $k_x \le k_y$.
  
  Suppose first that $d(x,y) < \frac14c\delta$. Let $n \in \N$ be such that
  \[
   \frac14c\delta^{n+1} \le d(x,y) <\frac14c\delta^n.
  \]
  Notice that since in each construction step $k+1$ we take a neighbourhood $\delta^k$ of the previous set $E_k$, we have that
  \begin{equation}\label{eq:bdrydist}
    \dist(E_k, X \setminus \Omega_I) \ge \sum_{i=k}^\infty\delta^i = \frac{\delta^k}{1-\delta} > \delta^k.
  \end{equation}
  Therefore, if $k_x < n$, we may take $\gamma$ to be a curve connecting $x$ to $y$ so that $\ell(\gamma) < 2 d(x,y)$, in which case for all $z \in \gamma$ we have
  \[
 \dist(z,X \setminus \Omega_I) \ge  \dist(x,X \setminus \Omega_I)- d(z,x) >\delta^{k_x} - \ell(\gamma) > 4d(x,y) - 2d(x,y) 
   \ge 2d(x,y),
  \]
  and, consequently, we get uniformity with constant $C_u = 2$.
  
  If $k_x \ge n$, we first connect $x$ and $y$ to $E_n$. We do this as follows. Starting with $x$, 
  let $B(z,r) \in \mathcal B_{k_x-1}$ be such that $x \in B(z,r)$,
  which exists by the definitions of $k_x$ and $E_{k_x}$.
  Next take $v \in B(z,r) \cap B(E_{k_x-1}, \delta^{k_x-1})$ and  $w \in E_{k_x-1}$ with $d(v,w) < \delta^{k_x-1}$, which we have by the definition of $\mathcal B_{k_x-1}$. Now we take the concatenation $\gamma_{k_x}^x$ of curves $\alpha_1$ going from $x$ to $z$, $\alpha_2$ going from $z$ to $v$ and $\alpha_3$ going from $v$ to $w$ with the length bounds
  $\ell(\alpha_1), \ell(\alpha_2) < r$ and $\ell(\alpha_3)<\delta^{k_x-1}$. Notice that $\gamma_{k_x}^x\subset E_{k_x}$ and that the curve $\gamma_{k_x}^x$ has the length bound
  \[
   \ell(\gamma_{k_x}^x) < r + r + \delta^{k_x-1} \le 5\delta^{k_x-1}.
  \]
  For the distance to the complement of $\Omega_I$ we can estimate
  \begin{equation}\label{eq:distboundary}
   \dist(\gamma_{k_x}^x,X\setminus\Omega_I) >  \delta^{k_x}
  \end{equation}
  by the fact that in the construction of $E_{k_x+1}$ we take a $\delta^{k_x}$-neighbourhood of $E_{k_x}$ and the curve $\gamma_{k_x}^x$ is contained in $E_{k_x}$. We then continue inductively connecting $w$ to $E_{k_x-2}$ by $\gamma_{k_x-1}^x$ and so on, until we have connected $x$ to a point $x'$ in $E_n$.
  
  The curve $\gamma^{x,x'}$ obtained by concatenating the previous curves $\gamma_{k_x}^x, \gamma_{k_x-1}^x, \dots, \gamma_{n+1}^x$ has the length bound
  \begin{equation}\label{eq:gxxbound}
   \ell(\gamma^{x,x'}) \le \sum_{i=n}^{k_x-1}5\delta^i \le 5\frac{\delta^n}{1-\delta} \le \frac14c\delta^{n-1}.
  \end{equation}
  With a similar construction, we connect $y$ to a point $y' \in E_n$ by a curve $\gamma^{y,y'}$ with length bounded from above by $c\delta^{n-1}/4$.
  We can bound the distance between $x'$ and $y'$ by
  \begin{equation}\label{eq:Endist}
   d(x',y') \le d(x',x)+d(x,y)+d(y,y')
   < \frac14c\delta^{n-1} + \frac14c\delta^n + \frac14c\delta^{n-1} < c\delta^{n-1}.
  \end{equation}
 
 Now we  use the crucial separation property given by Lemma \ref{lma:separatedballs}.
Let $B(z_x,r_x),B(z_y,r_y) \in \mathcal{B}_{n-1}$ be such that $x' \in B(z_x,r_x)$ and $y' \in B(z_y,r_y)$.
 Since the collection $\mathcal{B}_{n-1}$ was defined via Lemma \ref{lma:separatedballs} with the radius $\delta^{n-1}$, we have 
 \[
d(z_x,z_y) - r_x-r_y \notin (0,c\delta^{n-1}),  
 \]
 whereas \eqref{eq:Endist} gives 
 \[
  d(z_x,z_y) - r_x-r_y \le d(z_x,x') + d(x',y') + d(y',z_y)- r_x-r_y \le d(x',y') < c\delta^{n-1}.
 \]
Therefore, $d(z_x,z_y) \le r_x+r_y$ and thus we can connect $x'$ to $y'$ by a curve $\gamma^{x',y'}$ defined by going first with a curve $\beta_1$ from $x'$ to $z_x$, then with $\beta_2$ from $z_x$ to $z_y$ and finally with $\beta_3$ from $z_y$ to $y'$.  By selecting the curves so that $\ell(\beta_1) < r_x$,  
\[
\ell(\beta_2) < r_x + r_y + \min\left\{r_x - \ell(\beta_1) , \delta^{n+1}\right\},
\]
 and $\ell(\beta_3) < r_y$, the curve $\gamma^{x',y'}$ has the length bound
 \begin{equation}\label{eq:gxybound}
  \ell(\gamma^{x',y'}) \le \ell(\beta_1) + \ell(\beta_2) + \ell(\beta_3) <  2r_x + 2r_y \le 8\delta^{n-1},
 \end{equation}
 and its distance to the complement of $\Omega_I$ has the bound
 \begin{equation}\label{eq:xydist}
  \dist(\gamma^{x',y'},X \setminus\Omega_I) > \delta^n.
 \end{equation}

  Now, the curve $\gamma$ obtained by concatenating $\gamma^{x,x'},\gamma^{x'y'}$ and $\gamma^{y,y'}$ has, by \eqref{eq:gxxbound} and \eqref{eq:gxybound}, length at most
  \begin{equation}\label{eq:gammalength}
  \ell(\gamma) \le  \frac14 c\delta^{n-1} + 8\delta^{n-1} +  \frac14 c\delta^{n-1} \le 
   9\delta^{n-1} = \frac{36}{c\delta^2} \cdot \frac14c\delta^{n+1} \le \frac{36}{c\delta^2} d(x,y).
  \end{equation}
  
  Let us check the uniformity for this curve. Let $z \in \gamma$. Suppose first that $z \in \gamma^{x',y'}$. Then by \eqref{eq:xydist} and \eqref{eq:gammalength}, we get
  \begin{equation}\label{eq:est1}
  \min\left\{\ell(\gamma_{x,z}),\ell(\gamma_{z,y})\right\} \le \frac12\ell(\gamma) \le \frac92\delta^{n-1} = \frac{9}{2\delta} \delta^n \le \frac{9}{2\delta}\dist(z,X\setminus \Omega_I).
 \end{equation}
  By symmetry it then remains to check the case $z \in \gamma^{x,x'}$. Then there exists $k \ge n$ such that $z \in \gamma_k^x$. Then by \eqref{eq:distboundary} and the same estimate as in \eqref{eq:gxxbound}, we get
  \begin{equation}\label{eq:est2}
  \min\left\{\ell(\gamma_{x,z}),\ell(\gamma_{z,y})\right\} \le
  5\frac{\delta^k}{1-\delta} \le 10\delta^k  \le
  10 \dist(z,X\setminus \Omega_I).
 \end{equation}
  By combining the estimates \eqref{eq:gammalength}, \eqref{eq:est1} and \eqref{eq:est2} we see that $\gamma$ satisfies the uniformity condition with the constant $C_u = 36/(c\delta^2)$.

  We are still left with proving the uniformity in the case $d(x,y) \ge \frac14c\delta$. For this we first observe that we can connect $x$ to a point $x' \in E_1$, and $y$ to a point $y' \in E_1$ by curves having lengths bounded from above by $c/4$ and with pointwise lower-bounds for the distance to the boundary along the curves being enough for the uniformity condition. What remains to do is to connect $x'$ to $y'$ with a curve whose length is bounded by a constant (independent of $x'$ and $y'$) from above and whose distance to the boundary of $\Omega_I$ is bounded by another constant from below. This is achieved directly by compactness: on one hand, any two points in the totally bounded set ${E_1}$ can be joined by a rectifiable curve inside $B(E_1,\delta/2) \subset \Omega_I$ and the infimum over the lengths of curves joining two given points is a continuous function in terms of the endpoints and this function extends to the completion of $E_1$ as a continuous function. Thus, there exists the needed constant upper bound for the lengths of curves. On the other hand, the distance of these curves to the boundary of $\Omega_I$ is at least $\delta/2$.
  \medskip
  
  {\color{blue}\textsc{Closeness:}}
  Let us then show that for every $\varepsilon>0$ there exists $\tau>0$ so that using the $\tau$ in the construction above we get $X \setminus \Omega_I \subset B(X \setminus \Omega,\varepsilon)$.
  
  In order to have the dependence on $\tau$, write now $E_1(\tau)$ to be the connected component of
  $\left\{x \in X\,:\,\dist(x,X\setminus \Omega) > \tau\right\}$
  containing $x_0$. 
  
  Since $X \setminus B(X \setminus \Omega,\varepsilon)$ is totally bounded, there exists a set of points $\{x_i\}_{i=1}^N \subset \Omega$ so that 
  \[
   X \setminus B(X \setminus \Omega,\varepsilon) \subset \bigcup_{i=1}^N B(x_i,\varepsilon / 2).
  \]
  Each $x_i$ can be connected to $x_0$ by a curve inside $\Omega$ and so there exist $\tau_i>0$ for which $x_i \in E_1(\tau_i)$. Consequently, with $\tau = \min\{\varepsilon/2,\tau_1, \dots, \tau_N\}$ we have
  $X \setminus B(X \setminus \Omega,\varepsilon) \subset E_1(\tau)$, and thus, 
  \[
   X \setminus \Omega_I \subset X \setminus E_1(\tau) \subset B(X \setminus \Omega,\varepsilon).
  \]
  
  The final thing we still need to observe is that $\Omega_I \subset \Omega$.
  By the construction procedure, we have
  \[
   E_{k+1} \subset B(E_k,5\delta^k)
  \]
  for every $k \in \N$. Thus, by the choice of $\delta$ we get
  \[
   \Omega_I \subset B(E_1,\sum_{k=1}^\infty 5\delta^k) \subset B(E_1,\tau) \subset \Omega.
  \]
  This completes the proof for $\Omega_I$. The proof for $\Omega_O$ goes almost verbatim. Only the argument for closeness becomes easier in this case. In particular, for $\Omega_O$ one can then take $\tau = \varepsilon$.
\end{proof}

\section*{Acknowledgments}
The author thanks Nageswari Shanmugalingam for bringing this question to his attention, and for the related discussions during the IMPAN conference \emph{Latest in Geometric Analysis. Celebration of Pekka Koskela's 59th birthday} in November 2019. The author also thanks Anders Bj\"orn, Jana Bj\"orn, Panu Lahti, and the anonymous referee for the corrections and comments on the previous version of the paper.



\begin{thebibliography}{10}

\bibitem{A2002}
L. Ambrosio, \emph{Fine properties of sets of finite perimeter in doubling
metric measure spaces}, Calculus of variations, nonsmooth analysis
and related topics. Set-Valued Anal. \textbf{10} (2002), no. 2--3, 111--128

\bibitem{BS2007}
J. Bj\"orn and N. Shanmugalingam, \emph{Poincar\'e inequalities, uniform
domains and extension properties for Newton-Sobolev functions in
metric spaces},
J. Math. Anal. Appl. \textbf{332} (2007), no. 1, 190--208.

\bibitem{C1999}
J. Cheeger, \emph{Differentiability of Lipschitz functions on metric measure spaces}, Geom. Funct. Anal. \textbf{9} (1999), no. 3, 428--517

\bibitem{C1990}
 M. Christ, \emph{A $T(b)$ theorem with remarks on analytic capacity and the Cauchy integral},
Colloq. Math. \textbf{60/61} (1990), no. 2, 601--628.

\bibitem{HK1998} J. Heinonen and P. Koskela, \emph{Quasiconformal maps in metric spaces with controlled geometry}, Acta Math. \textbf{181} (1998), no. 1, 1--61.

\bibitem{H2011}
D. Herron, \emph{Uniform metric spaces, annular quasiconvexity and pointed tangent spaces},
Math. Scand. \textbf{108} (2011), no. 1, 115--145.

\bibitem{HK2012}
T. Hyt\"onen and A. Kairema, \emph{Systems of dyadic cubes in a doubling metric space} Colloq. Math. \textbf{126} (2012), no. 1, 1--33.

\bibitem{HM2012}
T. Hyt\"onen and H. Martikainen,
\emph{Non-homogeneous Tb Theorem and Random Dyadic Cubes on Metric Measure Spaces},
J. Geom. Anal. \textbf{22} (2012), no. 4, 1071--1107.

\bibitem{KRS2012}
A. K\"aenm\"aki, T. Rajala, and V. Suomala,
\emph{Existence of doubling measures via generalised nested cubes},
Proc. Amer. Math. Soc. \textbf{140} (2012), no. 9, 3275--3281.

\bibitem{K2003}
St. Keith, \emph{Modulus and the Poincar\'e inequality on metric measure spaces}, Math. Z. \textbf{245} (2003), no. 2, 255--292.

\bibitem{L2015}
P. Lahti,
\emph{Extensions and traces of functions of bounded variation on metric spaces},
J. Math. Anal. Appl. \textbf{423} (2015), no. 1, 521--537.

\bibitem{M2003}
M. Miranda, Jr., \emph{Functions of bounded variation on ``good'' metric
spaces}, J. Math. Pures Appl. \textbf{82} (2003), no. 8, 975--1004.

\bibitem{S2000}
N. Shanmugalingam, \emph{Newtonian spaces: An extension of Sobolev spaces to metric measure spaces}, Rev. Mat.
Iberoamericana \textbf{16} (2000), 243--279.

\end{thebibliography}
\end{document}